\crefname{equation}{}{}
\newtheorem{theorem}{Theorem}[section]
\newtheorem{lemma}[theorem]{Lemma}
\newtheorem*{conjecture*}{Conjecture}
\newtheorem*{problem}{Problem}
\theoremstyle{definition}
\theoremstyle{remark}
\newtheorem*{remark}{Remark}
\newtheorem*{example}{Example}
\newtheorem*{examples}{Examples}
\numberwithin{equation}{section}
\DeclareMathOperator{\Tr}{Tr}
\newcommand{\C}{\mathbb C}
\newcommand{\Z}{\mathbb Z}
\title[Derivatives of theta functions as Traces of Partition Eisenstein series]{Derivatives of theta functions as Traces of Partition Eisenstein series}
\thanks{2020 {\it{Mathematics Subject Classification.}} 11F03, 05A17, 11M36}
\keywords{partition Eisenstein series, quasimodular forms}
\author{Tewodros Amdeberhan, Ken Ono \and Ajit Singh}
\address{Dept. of Mathematics, Tulane University, New Orleans, LA 70118}
\email{tamdeber@tulane.edu}
\address{Dept. of Mathematics, University of Virginia, Charlottesville, VA 22904}
\email{ko5wk@virginia.edu}
\email{ajit18@iitg.ac.in}
\begin{document}
\begin{abstract} 
In his ``lost notebook'', Ramanujan used iterated derivatives of two theta functions to define sequences of $q$-series  $\{U_{2t}(q)\}$ and $\{V_{2t}(q)\}$ that he claimed to be quasimodular. We give the first explicit proof of this claim by expressing them in terms of ``partition Eisenstein series'', extensions of the classical Eisenstein series $E_{2k}(q),$ defined by
$$\lambda=(1^{m_1}, 2^{m_2},\dots, n^{m_n}) \vdash n \ \ \ \ \ \longmapsto \ \ \ \ \ 
E_{\lambda}(q):= E_2(q)^{m_1} E_4(q)^{m_2}\cdots E_{2n}(q)^{m_n}.
$$
For functions $\phi  :  \mathcal{P}\mapsto \C$ on partitions,  the {\it weight $2n$ partition Eisenstein trace} is
$$
\Tr_n(\phi;q):=\sum_{\lambda \vdash n} \phi(\lambda)E_{\lambda}(q).
$$
For all $t$, we prove that $U_{2t}(q)=\Tr_t(\phi_U;q)$ and $V_{2t}(q)=\Tr_t(\phi_V;q),$  where
$\phi_U$ and $\phi_V$  are natural partition weights, giving the first explicit quasimodular formulas for these series.
 \end{abstract}

\maketitle
\section{Introduction and Statement of Results}

In his ``lost notebook'', Ramanujan considered the \cite[page 369]{Rama} two sequences of $q$-series:
\begin{equation}\label{U}
U_{2t}(q)=\frac{1^{2t+1}-3^{2t+1}q+5^{2t+1}q^3-7^{2t+1}q^6+\cdots} {1-3q+5q^3-7q^6+\cdots}
=\frac{\sum_{n\geq0}(-1)^n(2n+1)^{2t+1}q^{\frac{n(n+1)}2}}{\sum_{n\geq0}(-1)^n(2n+1)q^{\frac{n(n+1)}2}},
 \end{equation}
 \begin{equation}\label{V}
V_{2t}(q)=\frac{1^{2t}-5^{2t}q-7^{2t}q^2+11^{2t}q^5+\cdots} {1-q-q^2+q^5+\cdots}
=\frac{\sum_{n=-\infty}^{\infty}(-1)^n(6n+1)^{2t}q^{\frac{n(3n+1)}2}}{\sum_{n=-\infty}^{\infty}(-1)^n q^{\frac{n(3n+1)}2}},
\end{equation}
and he offered identities such as
\begin{displaymath}
\begin{split}
&U_0=1,\ \ U_2=E_2,\ \ U_4=\frac13(5E_2^2-2E_4),\ \ U_6=\frac19(35E_2^3-42E_2E_4+16E_6),\dots\\
&V_0=1,\ \ V_2=E_2,\ \ V_4=3E_2^2-2E_4,\  \qquad V_6=15E_2^3-30E_2E_4+16E_6,\dots
\end{split}
\end{displaymath}
where $E_2(q), E_4(q),$ and $E_6(q)$ are the usual Eisenstein series
\begin{displaymath}
\begin{split}
E_2:=1-24\sum_{n=1}^{\infty} \sigma_1(n)q^n,\ \ 
E_4:=1+240\sum_{n=1}^{\infty}\sigma_3(n)q^n, \ \  {\text {\rm and}}\ \ 
E_6:=1-504\sum_{n=1}^{\infty} \sigma_5(n)q^n,
\end{split}
\end{displaymath}
and where $\sigma_v(n):=\sum_{d\mid n}d^v.$
He made the following claim:
$$\text{``\emph{In general $U_{2t}$ and $V_{2t}$ are of the form
$\sum K_{\ell,m,n} \, E_2^{\ell}E_4^mE_6^n,$ where $\ell+2m+3n=t$."}}$$ 
Berndt, Chan, Liu, Yee, and Yesilyurt \cite{Berndt1, Berndt2} proved this claim using Ramanujan's identities \cite{Rama2}
\begin{equation}\label{diffeq}
D(E_2) =\frac{E_2^2-E_4}{12},\ \  \
D(E_4)=\frac{E_2E_4-E_6}{3},\ \ \ {\text {\rm and}}\ \ \
D(E_6)=\frac{E_2E_6-E_4^2}{2},
\end{equation}
where $D:=q\frac{d}{dq}.$ 
However, their results are not explicit. Indeed,
Andrews and Berndt (see p. 364 of \cite{AndrewsBerndt}) proclaim that ``...it seems extremely difficult to find a general formula for all $K_{\ell,m,n}.$'' 

We offer a solution to the general problem of obtaining the first explicit  formulas for $U_{2t}$ and $V_{2t}.$ 
We note that Ramanujan's claim is that $U_{2t}$ and $V_{2t}$  are weight $2t$ quasimodular forms, as  the ring of quasimodular forms  is the polynomial ring  (for example, see \cite{Zagier})
$$
\mathbb{C}[E_2,E_4,E_6]=\mathbb{C}[E_2, E_4, E_6, E_8, E_{10}, \dots],
$$
and so our goal is to obtain explicit formulas in terms of the classical sequence of Eisenstein series (for example, see Chapter 1 of \cite{CBMS})
\begin{equation}\label{Eisenstein}
E_{2k}(q):=1-\frac{4k}{B_{2k}}\sum_{n=1}^{\infty}\sigma_{2k-1}(n)q^n,
\end{equation}
where $B_{2k}$ is the $2k$th Bernoulli number.
We express Ramanujan's $q$-series as explicit ``traces of partition Eisenstein series.''  

As an important step towards this goal, we first derive generating functions for his series. In terms of
Dedekind's eta-function $\eta(q):=q^{\frac{1}{24}}\prod_{n=1}^{\infty}(1-q^n)$ and Jacobi-Kronecker quadratic characters, we have the following result.

\begin{theorem}\label{GenFunctionRamanujan} As a power series in $X$, the following are true.

\noindent (1)  If $\chi_{-4}(\cdot)=\left(\frac{-4}{\cdot}\right),$ then we have
$$\sum_{t\geq0}(-1)^t U_{2t}(q) \cdot  \frac{X^{2t+1}}{(2t+1)!}=
\frac{1}{2 \eta(q)^3}\cdot \sum_{n\in\mathbb{Z}} \chi_{-4}(n)q^{\frac{n^2}8}\sin(nX).
$$

\noindent (2) If $\chi_{12}(\cdot)=\left(\frac{12}{\cdot}\right),$ then we have
	$$\sum_{t\geq0}(-1)^t V_{2t}(q) \cdot  \frac{X^{2t}}{(2t)!}=
	\frac{1}{2 \eta(q)}\cdot \sum_{n\in\mathbb{Z}} \chi_{12}(n)q^{\frac{n^2}{24}}\cos(nX).
	$$
\end{theorem}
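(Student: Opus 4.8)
The plan is to prove both identities by the same three-step mechanism: clear the denominator defining Ramanujan's series, interchange the two summations so as to recognize the Taylor expansions of sine and cosine, and then match the resulting theta-like series to the right-hand sides using a classical product identity together with a reindexing of the relevant character sum. Throughout I would work with formal power series in $X$ whose coefficients are $q$-series; equivalently, the whole argument can be carried out by comparing the coefficient of a fixed power $X^{2t+1}$ (respectively $X^{2t}$) on both sides, which sidesteps any question of convergence.

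For part (1), write $D_U(q):=\sum_{n\ge 0}(-1)^n(2n+1)q^{n(n+1)/2}$ for the common denominator in \eqref{U} and $N_t(q)$ for its numerator, so that $U_{2t}=N_t/D_U$ by definition. Multiplying the proposed generating function through by $D_U$ turns the left-hand side into $\sum_{t\ge 0}(-1)^t N_t(q)\,X^{2t+1}/(2t+1)!$. Interchanging the sums over $t$ and $n$ and using $\sin(y)=\sum_{t\ge0}(-1)^t y^{2t+1}/(2t+1)!$ with $y=(2n+1)X$ collapses this to $\sum_{n\ge0}(-1)^n q^{n(n+1)/2}\sin((2n+1)X)$. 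Now I would invoke Jacobi's identity $\sum_{n\ge0}(-1)^n(2n+1)q^{n(n+1)/2}=\prod_{n\ge1}(1-q^n)^3$, which gives $D_U=q^{-1/8}\eta(q)^3$. On the right-hand side, since $\chi_{-4}$ is an odd character the summand $\chi_{-4}(n)q^{n^2/8}\sin(nX)$ is invariant under $n\mapsto -n$, so the sum over $\Z$ equals twice the sum over $n\ge1$; writing the (necessarily odd) $n=2m+1$ with $\chi_{-4}(2m+1)=(-1)^m$ and $n^2/8=m(m+1)/2+\tfrac18$ produces exactly $q^{1/8}\sum_{m\ge0}(-1)^m q^{m(m+1)/2}\sin((2m+1)X)$, and the factors $q^{\pm1/8}$ and $\eta(q)^3$ cancel to give the claimed equality.

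Part (2) is entirely parallel. With $D_V(q):=\sum_{n\in\Z}(-1)^n q^{n(3n+1)/2}$ the denominator in \eqref{V}, clearing it and using $\cos(y)=\sum_{t\ge0}(-1)^t y^{2t}/(2t)!$ with $y=(6n+1)X$ reduces the left-hand side to $\sum_{n\in\Z}(-1)^n q^{n(3n+1)/2}\cos((6n+1)X)$, while Euler's pentagonal number theorem gives $D_V=\prod_{n\ge1}(1-q^n)=q^{-1/24}\eta(q)$. For the right-hand side I would use that $\chi_{12}$ is even, so again the $\Z$-sum of $\chi_{12}(n)q^{n^2/24}\cos(nX)$ folds; its support is exactly the integers coprime to $12$, i.e.\ the two progressions $n\equiv\pm1\pmod 6$, which are interchanged by $n\mapsto -n$, so the sum equals twice the part with $n\equiv1\pmod6$. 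Writing $n=6k+1$, one checks $\chi_{12}(6k+1)=(-1)^k$ and $n^2/24=k(3k+1)/2+\tfrac1{24}$, which reproduces $q^{1/24}\sum_{k\in\Z}(-1)^k q^{k(3k+1)/2}\cos((6k+1)X)$ and again cancels against $\eta(q)$ and $D_V$.

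The routine parts are the summation interchange (legitimate coefficientwise in $q$) and the two product identities, which are completely standard. I expect the only step needing genuine care to be the reindexing of the character sums, and specifically the $\chi_{12}$ computation in part (2): one must verify both that its support is precisely the arithmetic progression hit by $6k+1$ (up to the sign-swapping involution $n\mapsto -n$) and that the character value tracks the sign $(-1)^k$ coming from Euler's theorem. The analogous bookkeeping for $\chi_{-4}$ in part (1) is easier, since that character is supported on all odd integers, which are parametrized directly by $2m+1$.
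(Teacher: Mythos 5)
Your proof is correct: the denominator-clearing step, the sine/cosine Taylor interchange, and both reindexings check out (for part (1), $\chi_{-4}(2m+1)=(-1)^m$ and $(2m+1)^2/8=m(m+1)/2+1/8$; for part (2), $\chi_{12}(6k+1)=(-1)^k$ and $(6k+1)^2/24=k(3k+1)/2+1/24$), and the whole argument is legitimate coefficientwise in $X$ because the denominators $D_U$ and $D_V$ have constant term $1$ in $q$. The computational engine is the same as the paper's---trigonometric Taylor expansion, interchange of summation, Jacobi's identity, and Euler's pentagonal number theorem---but your route through it differs in a worthwhile way. The paper first proves a general statement (Lemma~\ref{GenFunction}): for any even or odd character $\chi$, with $\Theta(\chi;q)=\sum_{n\geq1}\chi(n)n^{a_\chi}q^{n^2}$ and $R_{2t}=D^t\Theta/\Theta$, it derives one sine-type generating function for the $R_{2t}$, and then deduces Theorem~\ref{GenFunctionRamanujan} by specializing to $\chi_{-4}$ and $\chi_{12}$, identifying $\Theta(\chi_{-4};q)=\eta(q^8)^3$ and $\Theta(\chi_{12};q)=\eta(q^{24})$, substituting $q\to q^{1/8}$ resp.\ $q\to q^{1/24}$, and taking real parts. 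Since that lemma always produces the odd series $\sum_t(-1)^tR_{2t}X^{2t+1}/(2t+1)!$ (with a factor $n^{a_\chi-1}=n^{-1}$ when $\chi$ is even), the paper's part (2) needs an extra differentiation in $X$ to land on the cosine series; your direct verification---clear the denominator, expand, reindex---builds the cosine in from the start and so avoids the $R_{2t}$ formalism, the fractional-power substitutions, and that differentiation trick altogether. The trade-off is generality: the paper's lemma applies to arbitrary theta functions attached to even or odd periodic functions (a point the authors flag as of independent interest), whereas your computation is tailored to the two Ramanujan series at hand.
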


\begin{remark}Theorem~\ref{GenFunctionRamanujan} represents two special cases of Theorem~\ref{GenFunction}, which pertains to arbitrary theta functions. 
Using Theorem~\ref{GenFunctionRamanujan}, we obtain
 Theorem~\ref{PowerSeriesIdentities} that gives two further identities for these particular generating functions as infinite products in trigonometric functions.
\end{remark}

These generating functions shall offer the connection to traces of partition Eisenstein series. To make this precise, we recall that a {\it partition of $n$} is any nonincreasing sequence of positive integers 
$\lambda=(\lambda_1,\lambda_2,\dots, \lambda_s)$ that sum to $n$, denoted $\lambda\vdash n.$ Equivalently, we use the notation $\lambda=(1^{m_1},\dots,n^{m_n})\vdash n$, where $m_j$ is the multiplicity of $j.$ For such $\lambda,$ we define the weight $2n$ {\it partition Eisenstein series}\footnote{These $E_{\lambda}$ should not be confused with the partition Eisenstein series introduced by Just and Schneider \cite{JustSchneider}, which are semi-modular instead of quasimodular.}
\begin{equation}
\lambda=(1^{m_1}, 2^{m_2},\dots, n^{m_n}) \vdash n \ \ \ \ \ \longmapsto \ \ \ \ \  E_{\lambda}(q):= E_2(q)^{m_1}E_4(q)^{m_2}\cdots E_{2n}(q)^{m_n}.
\end{equation}
The Eisenstein series $E_{2k}(q)$ corresponds to the partition $\lambda=(k)$, as we have $E_{(k^1)}(q)=E_{2k}(q)^1.$

To define partition traces, suppose that $\phi: \mathcal{P}\mapsto \C$ is a function on partitions. For each positive integer $n$, its {\it partition Eisenstein trace}
is the weight $2n$ quasimodular form 
\begin{equation}\label{PartitionTrace}
\Tr_n(\phi;q):=\sum_{\lambda \vdash n} \phi(\lambda)E_{\lambda}(q).
\end{equation}
Such traces arise in recent work on MacMahon's sums-of-divisors $q$-series (see Thm. 1.4 of \cite{AOS}).

For partitions
$\lambda=(1^{m_1},\dots,n^{m_n})\vdash n$, we require the following functions:
\begin{equation}\label{phiu}
\phi_U(\lambda):=4^{n}(2n+1)! \cdot \prod_{k=1}^n \frac{1}{m_k !}\left(\frac{B_{2k}}{(2{\color{black}k}) \, (2k)!}\right)^{m_k} ,
\end{equation}
\begin{equation}\label{phiv}
\phi_V(\lambda):=4^n(2n)! \cdot\prod_{k=1}^n \frac1{m_k!} \left(\frac{(4^k-1)B_{2k}}{(2{\color{black}k})\, (2k)!}\right)^{m_k}.
\end{equation}
Ramanujan's series are weighted traces of partition Eisenstein series of these functions.

\begin{theorem}\label{MainTheorem} If $t$ is a positive integer, then the following are true.

\noindent
(1) We have that
$U_{2t}(q)=\Tr_t(\phi_U;q).$

\noindent
(2) We have that
$V_{2t}(q)=\Tr_t(\phi_V;q).$
\end{theorem}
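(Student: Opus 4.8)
\emph{The plan.} The engine is the \emph{exponential formula}. Both weights have the multiplicative shape $\phi(\lambda)=\kappa_{|\lambda|}\prod_{k\ge1}\tfrac1{m_k!}c_k^{\,m_k}$ with the prefactor $\kappa_n$ depending only on $n$; writing $4^{|\lambda|}=\prod_k(4^k)^{m_k}$ lets me absorb $4^n$ into the $k$th term, so that the identity $\sum_{\lambda\vdash t}\prod_k y_k^{m_k}/m_k!=[x^t]\exp\!\big(\sum_{k\ge1}y_kx^k\big)$ turns the traces into exponential generating functions:
\[
\sum_{t\ge0}\frac{\Tr_t(\phi_U;q)}{(2t+1)!}\,x^t=\exp\!\Big(\sum_{k\ge1}\frac{4^kB_{2k}}{(2k)(2k)!}E_{2k}(q)\,x^k\Big),
\]
\[
\sum_{t\ge0}\frac{\Tr_t(\phi_V;q)}{(2t)!}\,x^t=\exp\!\Big(\sum_{k\ge1}\frac{4^k(4^k-1)B_{2k}}{(2k)(2k)!}E_{2k}(q)\,x^k\Big).
\]

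On the other side, set $x=-X^2$ in \Cref{GenFunctionRamanujan}. Dividing identity (1) by $X$ and reading (2) directly, the left-hand sides become $\sum_t\frac{U_{2t}}{(2t+1)!}x^t$ and $\sum_t\frac{V_{2t}}{(2t)!}x^t$, exactly the two series above. Hence \Cref{MainTheorem} is equivalent to the pair of logarithmic identities
\[
\log\!\Big(\frac1X\cdot\frac1{2\eta^3}\sum_{n\in\Z}\chi_{-4}(n)q^{n^2/8}\sin(nX)\Big)=\sum_{k\ge1}\frac{4^kB_{2k}}{(2k)(2k)!}E_{2k}(q)(-X^2)^k,
\]
\[
\log\!\Big(\frac1{2\eta}\sum_{n\in\Z}\chi_{12}(n)q^{n^2/24}\cos(nX)\Big)=\sum_{k\ge1}\frac{4^k(4^k-1)B_{2k}}{(2k)(2k)!}E_{2k}(q)(-X^2)^k,
\]
read as equalities of even formal power series in $X$ over $\C[[q^{1/24}]]$.

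To prove these, I would factor each theta function into an infinite product and take the logarithm. Jacobi's product for $\theta_1$ gives $\sum_n\chi_{-4}(n)q^{n^2/8}\sin(nX)=2q^{1/8}\sin X\prod_{n\ge1}(1-q^n)(1-2q^n\cos2X+q^{2n})$; cancelling $\eta^3=q^{1/8}\prod(1-q^n)^3$ leaves $\frac{\sin X}{X}\prod_{n\ge1}\frac{(1-q^ne^{2iX})(1-q^ne^{-2iX})}{(1-q^n)^2}$. Its logarithm splits into two pieces. The elementary factor contributes $\log\frac{\sin X}{X}$, whose classical Bernoulli (cotangent) expansion is precisely $\sum_k\frac{4^kB_{2k}}{(2k)(2k)!}(-X^2)^k$, supplying the constant terms of the $E_{2k}$. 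The $q$-product contributes $\sum_{n\ge1}\big[\log(1-q^ne^{2iX})+\log(1-q^ne^{-2iX})-2\log(1-q^n)\big]$; expanding $\log(1-u)=-\sum u^j/j$ and $\cos(2jX)-1$ in powers of $X$, then summing the Lambert series $\sum_{n,j\ge1}j^{2k-1}q^{nj}=\sum_N\sigma_{2k-1}(N)q^N$ and invoking \eqref{Eisenstein}, reproduces exactly the cuspidal part $-\frac{4k}{B_{2k}}\sum_N\sigma_{2k-1}(N)q^N$ of $E_{2k}$. Adding the two pieces reassembles $E_{2k}$ and yields the first identity.

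The $V$-identity runs in parallel, but its theta function is supported on residues coprime to $6$ (the generalized pentagonal numbers), so its factorization requires the quintuple—rather than merely the triple—product identity, the resulting shape being $2\eta(q)\cos X\prod_{n\ge1}(\cdots)$. Here the elementary factor is $\cos X$, and the expansion $\log\cos X=\sum_k\frac{4^k(4^k-1)B_{2k}}{(2k)(2k)!}(-X^2)^k$ already carries the twist $4^k-1=2^{2k}-1$; one must then check that the $q$-product supplies the matching factor $4^k-1$ in the divisor sum. I expect this to be the main obstacle: pinning down the exact product and tracking two trigonometric frequencies, so that the doubled-frequency contribution $2^{2k}$ and the single-frequency contribution combine into $4^k-1$ \emph{uniformly} across the constant and the cuspidal parts. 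Once the two log-identities are established, comparing coefficients of $x^t$ proves both \Cref{MainTheorem}(1) and (2).
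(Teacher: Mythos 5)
Your proposal follows essentially the same route as the paper: the exponential formula you invoke is P\'olya's cycle-index generating function (Lemma~\ref{PolyaGenFunction}), your two log-identities are the logarithmic form of the paper's key identities, and the factorizations you call on are those of Lemma~\ref{jacobi_plus}. For part (1) your outline is correct and complete: the triple-product factorization, the expansion $\log\frac{\sin X}{X}=\sum_{k\geq 1}\frac{4^kB_{2k}}{(2k)(2k)!}(-X^2)^k$ (this is \eqref{sinc}), and the Lambert-series identification of the $q$-part of the logarithm with the cuspidal part of $E_{2k}$ reproduce the paper's \eqref{identity1} and \eqref{keyidentity}, with all constants checking out.

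Part (2), however, is left genuinely unproven: you stop at precisely the computation on which the claim depends, namely that the quintuple-product factors produce the factor $4^k-1$ uniformly in the cuspidal terms. The step does go through, and here is the missing verification. By Lemma~\ref{jacobi_plus}(2), after cancelling $\eta(q)$ you must expand
\begin{displaymath}
\log\Big(\cos X\prod_{n\geq1}(1+2\cos(2X)q^n+q^{2n})(1-2\cos(4X)q^{2n-1}+q^{4n-2})\Big).
\end{displaymath}
The frequency-$2X$ factors contribute $2\sum_{k\geq1}\frac{4^k(-X^2)^k}{(2k)!}\,\mathbf{A}_{2k-1}(q)$, where $\mathbf{A}_{2k-1}(q)=\sum_{j\geq1}\frac{(-1)^{j-1}j^{2k-1}q^j}{1-q^j}=\mathbf{S}_{2k-1}(q)-4^k\mathbf{S}_{2k-1}(q^2)$ as in \eqref{AA}; the frequency-$4X$ factors, supported on odd exponents, contribute $-2\sum_{k\geq1}\frac{16^k(-X^2)^k}{(2k)!}\big(\mathbf{S}_{2k-1}(q)-\mathbf{S}_{2k-1}(q^2)\big)$, since $\sum_{n\geq1}q^{(2n-1)j}=\frac{q^j}{1-q^{2j}}$ and $\frac{q^j}{1-q^{2j}}=\frac{q^j}{1-q^j}-\frac{q^{2j}}{1-q^{2j}}$. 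The $\mathbf{S}_{2k-1}(q^2)$ contributions cancel (because $4^k\cdot4^k=16^k$), leaving $2(4^k-16^k)\mathbf{S}_{2k-1}(q)=-2\cdot4^k(4^k-1)\mathbf{S}_{2k-1}(q)$ as the coefficient of $\frac{(-X^2)^k}{(2k)!}$; by \eqref{Eisenstein} and \eqref{Sj} this is exactly the cuspidal part of $\frac{4^k(4^k-1)B_{2k}}{(2k)(2k)!}E_{2k}(q)(-X^2)^k$, while your (correct) expansion $\log\cos X=\sum_{k\geq1}\frac{4^k(4^k-1)B_{2k}}{(2k)(2k)!}(-X^2)^k$ supplies the constant parts, and the $X$-independent $q$-terms on the left cancel because both sides vanish at $X=0$. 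This is the same cancellation the paper engineers in its proof of Theorem~\ref{MainTheorem}(2), where the $\mathbf{S}_{2r-1}(q^2)$ terms drop out between the two exponentials with arguments $(-4X^2)^r$ and $(-4(2X)^2)^r$, and the identity $\cos X\cdot\frac{2X}{\sin 2X}\cdot\frac{\sin X}{X}=1$ plays the role of your $\log\cos X$ expansion.
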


\begin{examples} Here we offer examples of Theorem~\ref{MainTheorem}. \newline
\noindent
(1)  By direct calculation, we find for $t=3$ that
$$\phi_U((3^1))=16/9, \ \phi_U((1^1, 2^1))=-42/9,\ \ {\text {\rm and}}\ \  \phi_U((1^3))=35/9.
$$ This reproduces Ramanujan's identity
$$
\Tr_3(\phi_U;q)=\frac{1}{9}(16E_6-42E_2E_4+35E_2^3)=U_6.
$$

\noindent
(2) By direct calculation, we find for $t=4$ that
$$
\phi_V((4^1))=-272, \phi_V((1^1, 3^1))=448, \phi_V((2^2))=140, \ \phi_V((1^2, 2^1))=-420, \ {\text {\rm and}}\  
\phi_V((1^4))=105.
$$
Therefore, we have that
$$
\Tr_4(\phi_V;q)=-272 E_8+448 E_2E_6+140E_4^2-420E_2^2E_4+105E_2^4=V_8.
$$

\noindent
(3) Similar calculations give expressions for the weight 14 quasimodular forms $U_{14}$ and $V_{14}$
\begin{align*}
	U_{14}
	&=\frac1{405} (25025E_2^7 - 210210E_2^5E_4 + 420420E_2^3E_4^2 - 168168E_2E_4^3 + 400400E_2^4E_6 \\
	&- 960960E_2^2E_4E_6 + 192192E_4^2E_6 + 366080E_2E_6^2 - 720720E_2^3E_8 + 864864E_2E_4E_8 \\
	&- 329472E_6E_8 + 1048320E_2^2E_{10} - 419328E_4E_{10} - 1061376E_2E_{12} + 552960E_{14}), \\
	V_{14}
	&=135135E_2^7 - 1891890E_2^5E_4 + 6306300E_2^3E_4^2 - 4204200E_2E_4^3 + 5045040E_2^4E_6 \\
	& - 20180160E_2^2E_4E_6 + 6726720E_4^2E_6 + 10762752E_2E_6^2 - 12252240E_2^3E_8 \\
	& + 24504480E_2E_4E_8 - 13069056E_6E_8 + 23831808E_2^2E_{10} - 15887872E_4E_{10} \\
	& - 32195072E_2E_{12} + 22368256E_{14}.
\end{align*}
Then, using the identities
$$E_{14}=E_2^4E_6, \,\,\,\, E_{12}=\frac{250}{691}E_6^2+\frac{441}{691}E_4^3, \,\,\,\, E_{10}=E_4E_6, \,\,\,\, E_8=E_4^2$$
we get
\begin{align*}
	U_{14}
	&=\frac1{27}(-3648E_4^2E_6 - 17920E_2E_6^2 + 19320E_2E_4^3 - 300300E_2^3E_4^2 \\
	& \qquad + 400400E_2^4E_6 - 210210E_2^5E_4 + 87360E_2^2E_4E_6 + 25025E_2^7), \\
	V_{14}
	&=138048E_4^2E_6 - 885248E_2E_6^2 - 246792E_2E_4^3 -  5945940E_2^3E_4^2 \\
	& \qquad + 5045040E_2^4E_6 - 1891890E_2^5E_4 + 3651648E_2^2E_4E_6 + 135135E_2^7.
\end{align*}
\end{examples}

\begin{remark} The coefficients $\phi_V(\lambda)$ are always integers. The first author and other collaborators have found a combinatorial interpretation and a proof this fact, which will appear in a forthcoming work.
\end{remark}

In view of these results, it is natural to  pose the following problem.

\begin{problem} Determine and characterize further functions $\phi: \mathcal{P}\mapsto \C$ for which
$\{\Tr_t(\phi;q)\}$ is a natural and rich family of weight $2t$ quasimodular forms.
\end{problem}

To prove these results, we make use of the Jacobi Triple Product identity, special $q$-series, exponential generating functions for Bernoulli numbers, and properties of P\'olya's cycle index polynomials.
In Section~\ref{Section2} we derive a general result for $q$-series of the form (\ref{U}) and (\ref{V}) (see Theorem~\ref{GenFunction}), which gives Theorem~\ref{GenFunctionRamanujan} as special cases.
In Section~\ref{Section3} we prove
Theorem~\ref{MainTheorem} using these results and properties of P\'olya's cycle index polynomials.

\section*{Acknowledgements}
\noindent The authors thank the referee, George Andrews and Wei-Lun Tsai for comments on earlier drafts of this paper.
 The second author thanks the Thomas Jefferson Fund and the NSF
(DMS-2002265 and DMS-2055118). The third author is grateful for the support of a Fulbright Nehru Postdoctoral Fellowship.

\section{Generating functions for Ramanujan-type $q$-series}\label{Section2}

Theorem~\ref{GenFunctionRamanujan} gives two special cases of general generating functions associated to formal theta functions for Dirichlet characters. If $\chi$ modulo $N$ is a Dirichlet character, then let
\begin{equation}
\Theta(\chi;q):=\sum_{n=1}^{\infty} \chi(n)n^{a_{\chi}}q^{n^2},
\end{equation}
where we let
\begin{equation}
a_{\chi}:=\begin{cases} 0 \ \ \ \ \ &{\text {\rm if $\chi$ is even}},\\
1 \ \ \ \ \ &{\text {\rm if $\chi$ is odd.}}
\end{cases}
\end{equation}
Then,  in analogy with Ramanujan's $U_{2t}$ and $V_{2t}$ (see (\ref{U}) and (\ref{V})), we let
\begin{equation}\label{Rdef}
R_{2t}(\chi;q):=\frac{D^t\left(\Theta(\chi;q)\right)}{\Theta(\chi;q)}=
\frac{\sum_{n=1}^{\infty} \chi(n)n^{2t+a_{\chi}}q^{n^2}}{\Theta(\chi;q)}.
\end{equation}

\begin{lemma}\label{GenFunction} Assuming the notation above, as a power series in $X$ we have
$$
\sum_{t=0}^{\infty}(-1)^t R_{2t}(\chi;q)\cdot \frac{X^{2t+1}}{(2t+1)!}=
\frac{1}{2i\Theta(\chi;q)}\sum_{n\in \Z} \chi(n)q^{n^2}n^{a_{\chi}-1}\cdot e^{inX}.
$$
\end{lemma}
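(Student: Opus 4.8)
The plan is to clear the denominator and reduce everything to a single interchange of summation together with the Taylor series for sine. First I would multiply both sides by $\Theta(\chi;q)$, so that the left-hand side becomes $\sum_{t\ge 0}(-1)^t D^t(\Theta(\chi;q))\,X^{2t+1}/(2t+1)!$, using the defining relation $R_{2t}(\chi;q)\,\Theta(\chi;q)=D^t(\Theta(\chi;q))$. Since $D=q\,d/dq$ acts on the terms of $\Theta$ by $D^t(q^{n^2})=n^{2t}q^{n^2}$, I would expand $D^t(\Theta(\chi;q))=\sum_{n\ge1}\chi(n)n^{2t+a_\chi}q^{n^2}$ and substitute this in.

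Next I would interchange the order of the sums over $t$ and $n$ (legitimate as a formal identity in $X$, or by absolute convergence from the rapid decay of $q^{n^2}$ for $|q|<1$), and factor the powers as $n^{2t+a_\chi}X^{2t+1}=(nX)^{2t+1}\,n^{a_\chi-1}$. The inner sum over $t$ is then exactly the Taylor expansion of the sine function, giving
$$
\Theta(\chi;q)\sum_{t\ge0}(-1)^t R_{2t}(\chi;q)\frac{X^{2t+1}}{(2t+1)!}=\sum_{n\ge1}\chi(n)q^{n^2}n^{a_\chi-1}\sin(nX).
$$

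The final step is to rewrite this one-sided sine sum as the two-sided exponential sum appearing on the right. Here I would use the parity relation $\chi(-1)=(-1)^{a_\chi}$, together with $q^{(-n)^2}=q^{n^2}$ and $(-n)^{a_\chi-1}=(-1)^{a_\chi-1}n^{a_\chi-1}$. Pairing the terms indexed by $n$ and $-n$ in $\sum_{n\in\Z}\chi(n)q^{n^2}n^{a_\chi-1}e^{inX}$ (the $n=0$ term dropping out because $\chi(0)=0$), the total coefficient of $\chi(n)q^{n^2}n^{a_\chi-1}$ becomes $e^{inX}+(-1)^{2a_\chi-1}e^{-inX}=e^{inX}-e^{-inX}=2i\sin(nX)$. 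Dividing by $2i\,\Theta(\chi;q)$ then yields the claimed identity.

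The only genuinely delicate point is this last piece of bookkeeping: one must check that the factor $(-1)^{a_\chi}$ coming from $\chi(-1)$ and the factor $(-1)^{a_\chi-1}$ coming from $(-n)^{a_\chi-1}$ combine to $(-1)^{2a_\chi-1}=-1$ uniformly in the parity of $\chi$, which is precisely what makes the even and odd cases collapse into the single stated formula. The interchange of summation and the recognition of the sine series are otherwise routine.
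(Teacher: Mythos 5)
Your proof is correct, and it is essentially the paper's own argument run in reverse: the paper starts from the two-sided exponential sum, folds it into $\sum_{n\geq 1}\chi(n)q^{n^2}n^{a_\chi-1}\sin(nX)$ via the same parity identity $(-1)^{a_\chi-1}\chi(-n)=-\chi(n)$ that you verify, and then expands the sine series to recover $\sum_t (-1)^t D^t(\Theta(\chi;q))X^{2t+1}/(2t+1)!$. Your handling of the delicate sign bookkeeping (and of the $n=0$ term) matches the paper's, so there is nothing further to add.
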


\begin{remark} Theorem~\ref{GenFunction} holds for periodic functions $\chi : \Z\rightarrow \C$ that are either even or odd.
\end{remark}
\begin{proof}
By direct calculation, we have that
$$
\frac{1}{2i}\sum_{n\in \Z} \chi(n)q^{n^2}n^{a_\chi-1}\cdot e^{inX}=
\frac{1}{2i} \sum_{n=1}^{\infty}q^{n^2}n^{a_\chi-1}\left(\chi(n)e^{inX}+(-1)^{a_\chi-1}\chi(-n)e^{-inX}\right).
$$
For all $\chi,$ we have that $(-1)^{a_\chi-1}\chi(-n)=-\chi(n),$ and so this reduces to
$$
\frac{1}{2i}\sum_{n=1}^{\infty}\chi(n)q^{n^2}n^{a_\chi-1}\left(e^{inX}-e^{-inX}\right)=
\sum_{n=1}^{\infty}\chi(n)q^{n^2}n^{a_\chi-1}\sin(nX).
$$
Using the Taylor series for $\sin(nX)$, this gives (after change of summation) 
\begin{displaymath}
\begin{split}\frac{1}{2i}\sum_{n\in \Z} \chi(n)q^{n^2}n^{a_\chi-1}\cdot e^{inX}&=
\sum_{n=1}^{\infty}\chi(n)n^{a_\chi-1}q^{n^2}\sum_{t=0}^{\infty}(-1)^t\cdot \frac{(nX)^{2t+1}}{(2t+1)!}\\
&=\sum_{t=0}^{\infty}(-1)^t\cdot \frac{X^{2t+1}}{(2t+1)!}\cdot D^t(\Theta(\chi;q)).
\end{split}
\end{displaymath}
Thanks to (\ref{Rdef}), we obtain the claimed generating function by dividing through by $\Theta(\chi;q).$
\end{proof}

\begin{proof}[Proof of Theorem~\ref{GenFunctionRamanujan}] To prove claim (1), we consider ${\color{black}\chi_{-4}(n)}:= \left(\frac{-4}n\right),$ which is the only odd character modulo $4$. In this case we have $a_{\color{black}\chi_{-4}}=1$, and so Theorem~\ref{GenFunction} gives
$$
\sum_{t=0}^{\infty}(-1)^t R_{2t}({\color{black}\chi_{-4}};q)\cdot \frac{X^{2t+1}}{(2t+1)!}=
\frac{1}{2i\Theta({\color{black}\chi_{-4}};q)}\sum_{n\in \Z} \chi_{-4}(n)q^{n^2}\cdot e^{inX},
$$
Furthermore, Jacobi's classical identity (for example, see  p. 17 of \cite{CBMS}) implies that
$$
\eta(q^8)^3 = \Theta(\chi_{-4};q)=\sum_{n=0}^{\infty} (-1)^n (2n+1)q^{(2n+1)^2}.
$$
Therefore, we have that
$$
\sum_{t=0}^{\infty}(-1)^t R_{2t}(\chi_{-4};q)\cdot \frac{X^{2t+1}}{(2t+1)!}=
\frac{1}{2i\eta(q^8)^3}\sum_{n\in \Z} \chi_{-4}(n)q^{n^2}\cdot e^{inX}.
$$
Claim (1) follows by letting $q\rightarrow q^{\frac{1}{8}},$ replacing the complex exponential in terms of trigonometric functions, followed by taking the real part.

To prove claim (2), we note that Euler's Pentagonal Number Theorem (see p. 17 of \cite{CBMS})  implies that
$$
\eta(q^{24})=\sum_{n=1}^{\infty} \chi_{12}(n)q^{n^2},
$$
where $\chi_{12}(n)=\left(\frac{12}n\right)$ is the unique primitive character with conductor $12.$ Therefore, $a_{\chi}=0$, and so Theorem~\ref{GenFunction} gives 
$$
	\sum_{t=0}^{\infty}(-1)^t R_{2t}(\chi_{12};q) \cdot \frac{X^{2t+1}}{(2t+1)!}=
	\frac{1}{2i\eta(q^{24})}\sum_{n\in \Z} \chi_{12}(n)\frac{q^{n^2}}{n}\cdot e^{inX}.
	$$
Claim (2) follows by letting $q\rightarrow 	q^{\frac{1}{24}},$ then differentiating in $X$, followed by taking the real part as in (1).
\end{proof}

\section{Proof of Theorem~\ref{MainTheorem}}\label{Section3}
Here we prove Theorem~\ref{MainTheorem} using the generating functions in Theorem~\ref{GenFunctionRamanujan}. We apply P\'olya's cycle index polynomials and the exponential generating function for Bernoulli numbers. 

\subsection{Bernoulli numbers} Here we recall  a convenient generating function for Bernoulli numbers and we refer the reader to \cite[1.518.1]{GR}.
If $\text{\rm sinc}(X):=\sin X/X$, then we have
\begin{align} \label{sinc}
\frac1{\text{\rm sinc}(X)}&=\exp\left(-\sum_{k\geq1}\frac{(-4)^kB_{2k}}{(2k)(2k)!}\cdot  X^{2k}\right).
\end{align}

\subsection{P\'olya's cycle index polynomials}

We require P\'olya's cycle index polynomials in the case of symmetric groups (for example, see \cite{Stanley}). Namely, recall that given a partition $\lambda=(\lambda_1,\dots,\lambda_{\ell(\lambda)})\vdash t$ or $(1^{m_1},\dots,t^{m_t})\vdash t$, we have that the number of permutations in $\mathfrak{S}_t$ of cycle type $\lambda$ is $t!/z_{\lambda}$, where $z_{\lambda}:=1^{m_1}\cdots t^{m_t}m_1!\cdots m_t!$. The {\it cycle index polynomial} for the symmetric group $\mathfrak{S}_t$ is given by
\begin{align} \label{CIF}
Z(\mathfrak{S}_t)&=\sum_{\lambda\vdash t}\frac1{z_{\lambda}}\prod_{i=1}^{\ell(\lambda)}x_{\lambda_i}
=\sum_{\lambda\vdash t}\prod_{k=1}^t\frac1{m_k!}\left(\frac{x_k}{k}\right)^{m_k}.
\end{align}
We require the following well known generating function in $t$-aspect.
\begin{lemma}[Example 5.2.10 of \cite{Stanley}]\label{PolyaGenFunction} As a power series in $y$, the generating function for the cycle index polynomials satisfies
$$\sum_{t\geq0}Z(\mathfrak{S}_t)\,y^t=\exp\left(\sum_{k\geq1}x_k\frac{y^k}k\right).$$
\end{lemma}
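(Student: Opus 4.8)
The plan is to establish the identity directly, at the level of formal power series in $y$ with coefficients in the polynomial ring $\C[x_1,x_2,\dots]$, so that no analytic convergence is involved. The starting observation is that the exponential factors over the summation index $k$: since the summands commute, I would write
$$\exp\left(\sum_{k\geq1}x_k\frac{y^k}{k}\right)=\prod_{k\geq1}\exp\left(\frac{x_k}{k}\,y^k\right)=\prod_{k\geq1}\sum_{m_k\geq0}\frac{1}{m_k!}\left(\frac{x_k}{k}\right)^{m_k}y^{k m_k}.$$
This factorization is the conceptual heart of the argument: the $k$th factor is a bookkeeping device recording how many parts equal to $k$ (equivalently, how many $k$-cycles) occur, and the variable $x_k$ is tagged with the weight $y^k$.

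Next I would expand the product. Selecting a nonnegative integer $m_k$ from the $k$th factor, with all but finitely many equal to zero, contributes the monomial
$$\left(\prod_{k\geq1}\frac{1}{m_k!}\left(\frac{x_k}{k}\right)^{m_k}\right)y^{\sum_k k m_k}.$$
Extracting the coefficient of $y^t$ imposes the single constraint $\sum_k k\,m_k=t$, which is exactly the condition that the multiplicity sequence $(1^{m_1},2^{m_2},\dots)$ define a partition $\lambda\vdash t$. Summing over all admissible choices and comparing with the second expression for $Z(\mathfrak{S}_t)$ in (\ref{CIF}) gives
$$[y^t]\exp\left(\sum_{k\geq1}x_k\frac{y^k}{k}\right)=\sum_{\lambda\vdash t}\prod_{k=1}^{t}\frac{1}{m_k!}\left(\frac{x_k}{k}\right)^{m_k}=Z(\mathfrak{S}_t),$$
which is the asserted coefficient for every $t\geq0$ (the case $t=0$ recording the empty partition and the constant term $1$).

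I do not anticipate a genuine obstacle here, since this is a standard instance of the exponential formula; the only point that needs a word of justification is the rearrangement of the infinite product. That is harmless formally: because $k m_k\geq k$ whenever $m_k\geq1$, only the factors with $k\leq t$ can influence the coefficient of $y^t$, so each such coefficient is a finite sum of monomials in $x_1,\dots,x_t$ and the manipulations are term-by-term identities in $\C[x_1,\dots,x_t]$. An alternative would be to invoke the exponential formula for the cycle structure of permutations directly, but the self-contained expansion above is shorter and requires no external combinatorial machinery.
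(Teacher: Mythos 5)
Your proof is correct. A point of comparison: the paper does not prove Lemma~\ref{PolyaGenFunction} at all --- it simply quotes the identity from Stanley (Example 5.2.10 of \cite{Stanley}), where it is obtained via the exponential formula for counting permutations by cycle type. Your argument supplies a self-contained and purely algebraic verification: factoring $\exp\bigl(\sum_{k\geq1}x_k y^k/k\bigr)$ into $\prod_{k\geq1}\exp(x_k y^k/k)$, expanding each factor, and matching the coefficient of $y^t$ against the multiplicity form of the cycle index, namely the second expression in (\ref{CIF}). This route never needs the combinatorial interpretation of $Z(\mathfrak{S}_t)$ as an average over permutations (that interpretation is only relevant to the equivalence of the two expressions in (\ref{CIF}), which the paper already records), and your remark that only the factors with $k\leq t$ can contribute to the coefficient of $y^t$ is exactly the justification needed for the formal rearrangement of the infinite product. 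What the citation-based approach buys is brevity and a pointer to the general theory (the exponential formula, plethystic/species arguments, etc.); what your expansion buys is that the lemma becomes a two-line identity in $\C[x_1,\dots,x_t][[y]]$ requiring no external machinery, which is arguably preferable in a paper whose later proofs (of Theorem~\ref{MainTheorem}) apply the lemma as a formal power series identity with substituted values $x_k=Y_k$, $y=w$.
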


\begin{example}
 Here are the first few examples of P\'olya's cycle index polynomials:
$$Z(\mathfrak{S}_1)=x_1, Z(\mathfrak{S}_2)=\frac1{2!}(x_1^2+x_2), Z(\mathfrak{S}_3)=\frac1{3!}(x_1^3+3x_1x_2+2x_3).
$$
\end{example}

\subsection{Some power series identities}

We begin with formulas for the infinite series factors of the generating functions in Theorem~\ref{GenFunctionRamanujan}.

\begin{lemma} \label{jacobi_plus} As a power series in $X$, the following are true.

\noindent
(1) We have that
$$\frac{q^{-\frac{1}{8}}}{2}\cdot\sum_{n\in\mathbb{Z}} \chi_{-4} \cdot q^{\frac{n^2}8}\sin(nX)
=\sin X \cdot \prod_{j\geq1}(1-q^j)(1-2\cos(2X)q^j+q^{2j}).
$$

\noindent
(2) We have that
\begin{displaymath}
\begin{split}
\frac{q^{-\frac{1}{24}}}{2}&\sum_{n\in\mathbb{Z}} \chi_{12}\cdot q^{\frac{n^2}{24}} \cos(nX)\\
&\ \ \ \ = \cos X\prod_{n\geq1}(1-q^n)(1+2\cos(2X)q^n+q^{2n})(1-2\cos(4X)q^{2n-1}+q^{4n-2}).
\end{split}
\end{displaymath}
\end{lemma}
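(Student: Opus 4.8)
The plan is to prove both identities in Lemma~\ref{jacobi_plus} by applying the Jacobi Triple Product identity to the theta functions appearing on the left-hand sides, but with a twist: because of the $\sin(nX)$ and $\cos(nX)$ factors, I would first rewrite these trigonometric pieces as complex exponentials so that the sums become genuine theta series evaluated at a shifted argument. Recall the classical form
\[
\sum_{n\in\Z} z^n w^{\binom{n}{2}} = \prod_{j\geq 1}(1-w^j)(1+z w^{j-1})(1+z^{-1}w^j),
\]
or equivalently the symmetric version with $\sum_n z^n w^{n^2}$. The essential observation is that, for a quadratic character like $\chi_{-4}$ or $\chi_{12}$, the sum $\sum_n \chi(n) q^{n^2/(\cdot)} e^{inX}$ is a theta function whose ``nome'' is a power of $q$ and whose ``multiplier'' $z$ is built from $e^{\pm iX}$. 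So I would express each left-hand side as such a theta sum and then read off the product expansion directly from Jacobi Triple Product.

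For claim (1), I would start from the fact (already established in the proof of Theorem~\ref{GenFunctionRamanujan}) that $\Theta(\chi_{-4};q)=\eta(q^8)^3=\sum_{n\geq 0}(-1)^n(2n+1)q^{(2n+1)^2}$. First I would write $\sin(nX)=\frac{1}{2i}(e^{inX}-e^{-inX})$ and combine this with $\chi_{-4}(n)$ to fold the bilateral sum over $n\in\Z$ into a sum supported on odd $n$, exactly as in the Lemma~\ref{GenFunction} computation. The resulting expression is a Jacobi theta function of the form $\sum_n \chi_{-4}(n) q^{n^2/8} e^{inX}$, and since $\chi_{-4}$ restricts $n$ to odd residues with alternating sign, this is precisely a theta function $\sum_{n} (-1)^n z^{2n+1} w^{(2n+1)^2}$ with $w=q^{1/8}$ and $z=e^{iX}$. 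Applying Jacobi Triple Product to this and simplifying (pulling out the $\sin X$ from the $n=0$ balanced term and recognizing $z^{2}+z^{-2}=2\cos 2X$ in the product factors) should yield $\sin X\cdot\prod_{j\geq 1}(1-q^j)(1-2\cos(2X)q^j+q^{2j})$. The factor $q^{-1/8}$ on the left is exactly the normalization needed to absorb the $q^{1/8}$ from the minimal exponent $n=\pm 1$.

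For claim (2), the approach is parallel but uses the pentagonal-number identity $\eta(q^{24})=\sum_{n}\chi_{12}(n)q^{n^2/24}$ (so $n$ runs over residues $\pm 1, \pm 5 \pmod{12}$) together with $\cos(nX)=\frac{1}{2}(e^{inX}+e^{-inX})$. Here the character $\chi_{12}$ distributes the residues into two arithmetic progressions, which is why the final product has \emph{two} distinct quadratic factors $(1+2\cos(2X)q^n+q^{2n})$ and $(1-2\cos(4X)q^{2n-1}+q^{4n-2})$ rather than one: the Jacobi Triple Product for $\eta(q^{24})$, when twisted by $e^{inX}$, splits according to the sign pattern of $\chi_{12}$ on these progressions. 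I would set $w = q^{1/24}$, $z = e^{iX}$, apply Jacobi Triple Product to $\sum_n \chi_{12}(n) z^n w^{n^2}$, and then reorganize the infinite product by collecting the factors coming from the $n\equiv \pm 1$ versus $n \equiv \pm 5$ progressions, using $z^a + z^{-a} = 2\cos(aX)$ identities to get the stated cosine terms.

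The main obstacle I anticipate is the bookkeeping in claim (2): correctly tracking how the six residue classes modulo $12$ interact with the sign character $\chi_{12}$ and the shift $e^{inX}$, so that the Jacobi product factors group into exactly the two cosine-bearing factors with the indicated exponent patterns ($q^n$ versus $q^{2n-1}$). One must be careful that the argument doubling/quadrupling ($2\cos 2X$ versus $2\cos 4X$) emerges naturally from the index of summation rather than being imposed. A clean way to manage this is to substitute the explicit product form of $\eta(q^{24})$ (as a triple product in $q$) and then perform the $X$-twist factor by factor, checking that each trigonometric identity $2\cos(kX) = e^{ikX}+e^{-ikX}$ matches the exponent of $z$ produced by the relevant theta term. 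The analogous step in claim (1) is comparatively routine because $\chi_{-4}$ involves only a single progression of odd integers.
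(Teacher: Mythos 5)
Your treatment of part (1) is sound and is essentially the paper's own argument: substitute $z=q^{1/2}e^{2iX}$ into the Jacobi Triple Product, pull out $2i\sin X=e^{iX}(1-e^{-2iX})$, recognize $(1-q^je^{2iX})(1-q^je^{-2iX})=1-2\cos(2X)q^j+q^{2j}$, and take real parts. No issue there.

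Part (2), however, contains a genuine gap. You propose to ``apply Jacobi Triple Product to $\sum_n \chi_{12}(n)z^nw^{n^2}$'' and then regroup the resulting factors, but that twisted sum is \emph{not} a specialization of the triple product. Parametrize the integers coprime to $12$ as $\pm(6m+1)$ with $m\in\Z$ and $\chi_{12}(6m+1)=(-1)^m$; then
$$\sum_{n\in\Z}\chi_{12}(n)\,z^n w^{n^2}=\sum_{m\in\Z}(-1)^m w^{(6m+1)^2}\left(z^{6m+1}+z^{-6m-1}\right).$$
Each of the two pieces $\sum_m(-1)^m w^{(6m+1)^2}z^{\pm(6m+1)}$ is individually of triple-product type, so what JTP actually hands you is a \emph{sum of two distinct infinite products}, not a single product. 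When $z=1$ (Euler's Pentagonal Number Theorem) the two pieces coincide and JTP suffices, which is why the untwisted identity $\eta(q^{24})=\sum_n\chi_{12}(n)q^{n^2}$ is easy; for generic $z=e^{iX}$ they do not coincide, and no reorganization of factors will convert a sum of two products into one product. The identity that performs exactly this recombination is the \emph{quintuple} product identity, and that is what the paper invokes (Carlitz--Subbarao, reference [QPI]): starting from $\sum_{n\in\Z}q^{n(3n+1)}(z^{3n}-z^{-3n-1})=\prod_{n\geq1}(1-q^{2n})(1-zq^{2n})(1-q^{2n-2}/z)(1-z^2q^{4n-2})(1-q^{4n-2}/z^2)$, one substitutes $q\to q^{1/2}$, $z\to-z^2$, divides out $z+z^{-1}$, and sets $z=e^{iX}$ to obtain $\sum_m(-1)^mq^{m(3m+1)/2}\cos((6m+1)X)$ as the single product with the two cosine-bearing factors $(1+2\cos(2X)q^n+q^{2n})$ and $(1-2\cos(4X)q^{2n-1}+q^{4n-2})$. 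In short, the difficulty you flagged as ``bookkeeping'' of residue classes is not bookkeeping at all: it is the need for a strictly stronger identity than JTP, and your plan as written would stall at a sum of two triple products.
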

\begin{proof} Both claims follow from the Jacobi Triple Product Identity (see Th. 2.8 of \cite{Andrews})
\begin{equation}\label{JTP}
\sum_{n\in\mathbb{Z}}(-1)^nq^{\frac{n^2}2}z^n=\prod_{j\geq1}(1-q^j)(1-q^{j-\frac{1}2}z)(1-q^{j-\frac{1}2}z^{-1}).
\end{equation}
To prove (1), we make the substitutions $2i\sin X=e^{iX}(1-e^{-2iX})$ and $z=q^{\frac12}e^{2iX}$ to obtain
\begin{align*}
\frac1{2i} \sum_{n\in\mathbb{Z}}(-1)^nq^{\frac{n^2+n}{2}}e^{(2n+1)iX}=\sin X\prod_{j\geq1}(1-q^j)(1-q^je^{2iX})(1-q^je^{-2iX}) .
\end{align*}
To obtain claim (1), we note the following simple reformulation
\begin{align*}
	\frac1{2i} \sum_{n\in\mathbb{Z}}(-1)^nq^{\frac{n^2+n}{2}}e^{(2n+1)iX}
=\frac{q^{-\frac{1}{8}}}{2i}\cdot\sum_{n\in\mathbb{Z}} \chi_{-4} \cdot q^{\frac{n^2}8}e^{inX},
\end{align*}
and then take the real part of both sides.

We now turn to claim (2). Thanks to \cite[(1.1)]{QPI}, we obtain
$$\sum_{n\in\mathbb{Z}} q^{n(3n+1)}(z^{3n}-z^{-3n-1})
=\prod_{n\geq1}(1-q^{2n})(1-zq^{2n})\left(1-\frac{q^{2n-2}}{z}\right)(1-z^2q^{4n-2})\left(1-\frac{q^{4n-2}}{z^2}\right).$$

By replacing $q\rightarrow q^{\frac{1}{2}}, z\rightarrow -z^2$, factoring out $1+z^{-2}$ and multiplying through by $z$, we get
$$\sum_{n\in\mathbb{Z}} (-1)^nq^{\frac{n(3n+1)}2}\frac{z^{6n+1}+z^{-6n-1}}{z+z^{-1}}
=\prod_{n\geq1}(1-q^n)(1+z^2q^n)\left(1+\frac{q^n}{z^2}\right)(1-z^4q^{2n-1})\left(1-\frac{q^{2n-1}}{z^4}\right).$$
After letting $z=e^{iX},$ we pair up conjugate terms to get 
\begin{displaymath}
\begin{split}
&\sum_{n\in\mathbb{Z}} (-1)^nq^{\frac{n(3n+1)}2}\cos(6n+1)X\\
&\hskip.5in =\cos X \prod_{n\geq1}(1-q^n)(1+2\cos(2X)q^n+q^{2n})(1-2\cos(4X)q^{2n-1}+q^{4n-2}).
\end{split}
\end{displaymath}
 The left hand side of the expression above equals the infinite sum in Lemma \ref{jacobi_plus} (2).
\end{proof}

To prove Theorem~\ref{MainTheorem}, we also require the following power series identities that give reformulations of the generating functions for $U_{2t}$ and $V_{2t}.$

\begin{theorem}\label{PowerSeriesIdentities} The following identities are true.

\noindent
(1) As power series in $X$, we have
$$
\sum_{t\geq0} (-1)^t U_{2t}(q)\cdot \frac{X^{2t+1}}{(2t+1)!}=
\sin X \cdot \prod_{j\geq1}\left[1+\frac{4(\sin^2X)q^j}{(1-q^j)^2}\right].
$$

\noindent
(2) As power series in $X$, we have
$$
\sum_{t\geq0} (-1)^t V_{2t}(q)\cdot \frac{X^{2t}}{(2t)!}=
\cos X \cdot
\prod_{j\geq1}\left[1-\frac{4(\sin^2X)q^j}{(1+q^j)^2}\right]\left[1+\frac{4(\sin^22X)q^{2j-1}}{(1-q^{2j-1})^2}\right].
$$
\end{theorem}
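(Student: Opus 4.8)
The plan is to prove both identities by directly combining the theta-function generating functions of \Cref{GenFunctionRamanujan} with the Jacobi-Triple-Product evaluations in \Cref{jacobi_plus}, so that the only remaining work is elementary: cancelling the $\eta$-factors against the products $\prod(1-q^j)$ that appear on the right, and rewriting the trigonometric factors $1\pm 2\cos(kX)q^j+q^{2j}$ in the bracketed form using double-angle identities. No new analytic input is needed; everything is a formal identity of power series in $X$ whose coefficients are $q$-series.

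For claim (1), I would start from \Cref{GenFunctionRamanujan}(1) and substitute the evaluation of \Cref{jacobi_plus}(1), which rearranges to
$$\sum_{n\in\Z}\chi_{-4}(n)q^{\frac{n^2}{8}}\sin(nX)=2q^{\frac18}\sin X\prod_{j\geq1}(1-q^j)\left(1-2\cos(2X)q^j+q^{2j}\right).$$
Writing $\eta(q)^3=q^{1/8}\prod_{j\geq1}(1-q^j)^3$, the prefactors $2$ and $q^{1/8}$ cancel and two of the three copies of $\prod(1-q^j)$ survive in the denominator, leaving
$$\sum_{t\geq0}(-1)^tU_{2t}(q)\frac{X^{2t+1}}{(2t+1)!}=\sin X\prod_{j\geq1}\frac{1-2\cos(2X)q^j+q^{2j}}{(1-q^j)^2}.$$
The claim then follows from the single algebraic identity $1-2\cos(2X)q^j+q^{2j}=(1-q^j)^2+4q^j\sin^2 X$, which is just the double-angle formula $2\sin^2 X=1-\cos(2X)$.

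For claim (2), the same combination of \Cref{GenFunctionRamanujan}(2) and \Cref{jacobi_plus}(2) cancels $\eta(q)=q^{1/24}\prod(1-q^j)$ completely against the single $\prod(1-q^n)$ on the right, yielding
$$\sum_{t\geq0}(-1)^tV_{2t}(q)\frac{X^{2t}}{(2t)!}=\cos X\prod_{n\geq1}\left(1+2\cos(2X)q^n+q^{2n}\right)\left(1-2\cos(4X)q^{2n-1}+q^{4n-2}\right).$$
Matching against the target requires two further double-angle rewritings, namely $1+2\cos(2X)q^n+q^{2n}=(1+q^n)^2-4q^n\sin^2 X$ and $1-2\cos(4X)q^{2n-1}+q^{4n-2}=(1-q^{2n-1})^2+4q^{2n-1}\sin^2 2X$, after which the two bracketed factors of the theorem appear, but each now carries a spurious denominator $(1+q^n)^2$, respectively $(1-q^{2n-1})^2$.

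The one genuinely non-routine step is disposing of these denominators, and I expect this to be the main (mild) obstacle: one must recognize that the total product of denominators is trivial, via the classical Euler identity $\prod_{j\geq1}(1+q^j)(1-q^{2j-1})=1$ (equivalently $\prod(1+q^j)=\prod(1-q^{2j-1})^{-1}$, obtained by splitting $\prod(1-q^j)$ into its even- and odd-index factors). Squaring this identity gives $\prod_{n\geq1}(1+q^n)^2(1-q^{2n-1})^2=1$, so the denominators cancel and the product collapses to exactly the bracketed form in claim (2). Apart from checking that all these infinite products converge $q$-adically, so that the regrouping of factors and the cancellation are legitimate, the remainder of the argument is pure bookkeeping.
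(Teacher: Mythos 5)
Your proposal is correct and follows essentially the same route as the paper: both combine Theorem~\ref{GenFunctionRamanujan} with Lemma~\ref{jacobi_plus}, cancel the $\eta$-factors against $\prod_{j\geq 1}(1-q^j)$, rewrite the factors $1\pm 2\cos(kX)q^j+q^{2j}$ via the double-angle identity $2\sin^2 X = 1-\cos(2X)$, and in part (2) dispose of the leftover denominators using Euler's identity $\prod_{k\geq 1}(1+q^k)=\prod_{k\geq 1}(1-q^{2k-1})^{-1}$. The only difference is expository: you spell out the $\eta$-cancellation and the convergence caveat explicitly, while the paper treats these as immediate.
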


\begin{proof}
We first prove claim (1). By combining Theorem \ref{GenFunctionRamanujan} (1) and Lemma \ref{jacobi_plus} (1), we obtain 
\begin{align*}
 \sum_{t\geq0} (-1)^t U_{2t}(q)\cdot \frac{X^{2t+1}}{(2t+1)!}=\sin X\cdot \prod_{j\geq1}\frac{(1-2\cos(2X)q^j+q^{2j})}{(1-q^j)^2}.
\end{align*}
A straightforward algebraic manipulation with $-2\cos(2X)=-2+4\sin^2X$ yields
$$\sum_{t\geq0} (-1)^t U_{2t}(q)\cdot \frac{X^{2t+1}}{(2t+1)!}=\sin X \cdot \prod_{j\geq1}\left[1+\frac{4(\sin^2X)q^j}{(1-q^j)^2}\right].
$$

Now we turn to claim (2). By combining Theorem \ref{GenFunctionRamanujan} (2) and Lemma \ref{jacobi_plus} (2), we obtain 
 \begin{align*}
 &\sum_{t\geq0} (-1)^t V_{2t}(q)\cdot \frac{X^{2t}}{(2t)!}
=\cos X\prod_{n\geq1}(1+2\cos(2X)q^n+q^{2n})(1-2\cos(4X)q^{2n-1}+q^{4n-2}) \\
&\ \hskip.5in =\cos X \prod_{k\geq1}(1+q^k)^2(1-q^{2k-1})^2
\prod_{j\geq1}\left[1-\frac{4(\sin^2X)q^j}{(1+q^j)^2}\right]\left[1+\frac{4(\sin^22X)q^{2j-1}}{(1-q^{2j-1})^2}\right].
\end{align*} 
The proof now follows from the simple identity
$$\prod_{k=1}^{\infty}(1+q^k)=\prod_{k=1}^{\infty}\frac1{1-q^{2k-1}}.
$$
\end{proof}

\subsection{Proof of Theorem~\ref{MainTheorem}}
For each positive odd integer $j$, we consider the Lambert series
\begin{equation}\label{Sj}
\mathbf{S}_j(q):=\sum_{m\geq1}\frac{m^jq^m}{1-q^m} 
= \frac{B_{j+1}}{2(j+1)} - \frac{B_{j+1}}{2(j+1)}E_{j+1}(q).
\end{equation}
This expression in terms of the $E_{j+1}(q)$ follows from (\ref{Eisenstein}).
The proof of Theorem~\ref{MainTheorem} boils down to deriving expressions for the power series in Theorem~\ref{PowerSeriesIdentities} in terms of the $\mathbf{S}_j(q).$

\begin{proof}[Proof of Theorem~\ref{MainTheorem}]
We first prove claim (1) regarding Ramanujan's $U_{2t}$ series. The key fact underlying the proof is the following power series identity.
\begin{equation}\label{keyidentity}
\sum_{t\geq0} (-1)^t U_{2t}(q)\cdot \frac{X^{2t+1}}{(2t+1)!}
=\sin X\cdot \exp\left(-2\sum_{r\geq1}\frac{\mathbf{S}_{2r-1}(q)}{(2r)!} (-4X^2)^r \right).
\end{equation}
Thanks to Theorem~\ref{PowerSeriesIdentities} (1), this identity will follow from
\begin{equation}\label{identity1}
\exp\left(-2\sum_{r\geq1}\frac{\mathbf{S}_{2r-1}(q)}{(2r)!} (-4X^2)^r\right)=
\prod_{j\geq1} \left[1+\frac{4(\sin^2X)q^j}{(1-q^j)^2}\right].
\end{equation}
To establish (\ref{identity1}),  we compute the following double-sum in two different ways. First, we use the Taylor expansion of $\cos(y)$ and then interchange the order of summation to get
$$\sum_{j,k\geq1}\frac{q^{kj}\cos(2kX)}k
=\sum_{r\geq0}\frac{(-4X^2)^r}{(2r)!}\sum_{k\geq1}k^{2r-1}\sum_{j\geq1}q^{kj}.$$
By combining the geometric series and the Taylor series for $\log(1-Y)$ with  \eqref{Sj}, we obtain
\begin{equation}\label{eq1}
\sum_{j,k\geq1}\frac{q^{kj}\cos(2kX)}k	=\sum_{r\geq0}\frac{(-4X^2)^r}{(2r)!}\sum_{k\geq1} \frac{k^{2r-1}q^k}{1-q^k}
	=-\log(q)_{\infty}+\sum_{r\geq1}\frac{\mathbf{S}_{2r-1}(q)}{(2r)!} (-4X^2)^r,
\end{equation}
where $(q)_{\infty}:=\prod_{n=1}^{\infty}(1-q^n)$ is the $q$-Pochhammer symbol.

On the other hand, using $2\cos\theta=e^{i\theta}+e^{-i\theta}$ and Taylor expansion of $\log(1-Y)$, we find that	
\begin{displaymath}
\begin{split}\sum_{j,k\geq1}\frac{q^{kj}\cos(2kX)}k
	&=\frac12\sum_{j\geq1} \left(\sum_{k\geq1} \frac{(e^{2iX}q^j)^k}k+\sum_{k\geq1} \frac{(e^{-2iX}q^j)^k}k\right)\\
	&=-\frac12\sum_{j\geq1} \log\left[1-2(\cos 2X )q^j+q^{2j}\right].
\end{split}
\end{displaymath}
After straightforward algebraic manipulation, we get
	\begin{equation}\label{eq2}
	\sum_{j,k\geq1}\frac{q^{kj}\cos(2kX)}k
		=-\log(q)_{\infty}-\frac12\log\left(\prod_{j\geq1} \left[1+\frac{4(\sin^2X)q^j}{(1-q^j)^2}\right]\right).
\end{equation} 
	Identity (\ref{identity1}) follows by comparing \eqref{eq1} and \eqref{eq2}, and in turn confirms (\ref{keyidentity}).

We now investigate the exponential series in (\ref{keyidentity}).	Thanks to 
 (\ref{Sj}), followed by an application of (\ref{sinc}), we obtain
\begin{align*}
&\exp\left(-2\sum_{k\geq1}\frac{(-4)^k\mathbf{S}_{2k-1}(q)}{(2k)!} \,X^{2k} \right) \\
&\hskip.4in =\exp\left(-\sum_{k\geq1}\frac{(-4)^kB_{2k}}{(2k)(2k)!}\,X^{2k} \right) \cdot
\exp\left(\sum_{k\geq1}\frac{B_{2k}\cdot E_{2k}(q)}{(2k)(2k)!} \,(-4X^2)^k\right) \\
&\hskip.4in =\frac{X}{\sin X}\cdot \exp\left(\sum_{k\geq1}\frac{B_{2k}\cdot E_{2k}(q)}{2\,(2k)!} \,\frac{(-4X^2)^k}k \right). 
\end{align*}
We recognize this last expression in the context of P\'olya's cycle index polynomials. Namely,
Lemma~\ref{PolyaGenFunction} gives the identity (here $\lambda=(1^{m_1}\dots t^{m_t}\vdash t)$)
$$\exp\left(\sum_{k\geq1}Y_k\,\frac{w^k}k\right)
=\sum_{t\geq0}\left(\sum_{\lambda\vdash t}  
\prod_{k=1}^t \frac1{m_k!} \left(\frac{Y_k}{k}\right)^{m_k} 
\right)w^t,
$$
which we apply with $Y_k=\frac{B_{2k}\cdot E_{2k}(q)}{2(2k)!}$ and $w=-4X^2.$ This gives
\begin{align*}
&\sum_{t\geq0} (-1)^t U_{2t}(q)\frac{X^{2t+1}}{(2t+1)!}
=\sin X\cdot \exp\left(-2\sum_{k\geq1}\frac{\mathbf{S}_{2k-1}(q)}{(2k)!} \,(-4X^2)^{k}\right) \\
&\hskip.7in =\sin X\cdot\frac{X}{\sin X}\cdot
\sum_{t\geq0} \left(\sum_{\lambda\vdash t} \prod_{k=1}^t \frac1{m_k!}
\left(\frac{B_{2k}\cdot E_{2k}(q)}{(2k)(2k)!}\right)^{m_k}\right)(-4X^2)^t.
\end{align*}
By comparing the coefficients of $X^{2t+1}$, we find that
$$ (-1)^t \frac{U_{2t}(q)}{(2t+1)!}=(-4)^t\sum_{\lambda\vdash t}  
\prod_{k=1}^t \frac1{m_k!} \left(\frac{B_{2k}\cdot E_{2k}(q)}{{(2\color{black}k)}(2k)!}\right)^{m_k},$$
which in turn, thanks to
(\ref{phiu}), proves Theorem~\ref{MainTheorem} (1).

We now turn to claim (2) regarding Ramanujan's $V_{2t}$ whose proof is analogous to the proof of  (1). The main difference follows from the need for the generalized Lambert series
\begin{equation}\label{AA}
\mathbf{A}_{2r-1}(q):=\sum_{k\geq1}\frac{(-1)^{k-1}k^{2r-1}q^k}{1-q^k} =
\mathbf{S}_{2r-1}(q) - 4^r\mathbf{S}_{2r-1}(q^2).
\end{equation}
The expression in $\mathbf{S}_{2r-1}$ is straightforward.
For the sake of brevity, we note that calculations analogous to the proof of (\ref{identity1}) gives the identity
$$
\prod_{n\geq1}\left[1-\frac{4(\sin^2X)q^n}{(1+q^n)^2}\right]=
\exp\left(2\sum_{r\geq1}\frac{\mathbf{A}_{2r-1}(q)(-4X^2)^r}{(2r)!} \right),
$$
as well as 
\begin{displaymath}
\begin{split}
&\prod_{n\geq1}\left[1+\frac{4(\sin^22X)q^{2n-1}}{(1-q^{2n-1})^2}\right]
     =\prod_{n\geq1}\left[1+\frac{4(\sin^22X)q^n}{(1-q^n)^2}\right] \cdot
     \prod_{n\geq1}\left[1+\frac{4(\sin^22X)q^{2n}}{(1-q^{2n})^2}\right]^{-1} \\
		&\hskip.7in=\exp\left(-2\sum_{r\geq1}\frac{\mathbf{S}_{2r-1}(q)(-16X^2)^r}{(2r)!} \right)
		\exp\left(2\sum_{r\geq1}\frac{\mathbf{S}_{2r-1}(q^2)(-16X^2)^r}{(2r)!} \right).
\end{split}
\end{displaymath}
Therefore, combining these two expressions with (\ref{AA}), Theorem~\ref{PowerSeriesIdentities} (2) gives
	\begin{align*}
		&\sum_{t\geq0} (-1)^t V_{2t}(q) \cdot \frac{X^{2t}}{(2t)!} \\
		&\hskip.7in = \cos X\cdot \exp\left( -2\sum_{r\geq1}\frac{\mathbf{S}_{2r-1}(q)}{(2r)!} (-4(2X)^2)^r\right) 
		  \exp\left( 2\sum_{r\geq1}\frac{\mathbf{S}_{2r-1}(q)}{(2r)!} (-4X^2)^r\right).
	\end{align*}
As in the proof of (1), we recognize generating functions for P\'olya's cycle index polynomials. Namely, by applying	
(\ref{sinc}) and Lemma~\ref{PolyaGenFunction} we obtain
	\begin{align*} 
		\sum_{t\geq0} \frac{(-1)^t V_{2t}(q) X^{2t}}{(2t)!} 
		&=\cos X\cdot \frac{2X}{\sin(2X)} \cdot 
		\exp\left(\sum_{k\geq1}\frac{4^kB_{2k}\cdot E_{2k}(q)}{2\, (2k)!}\frac{(-4X^2)^k}k\right)    \\
		 &  \qquad \qquad \qquad \times \frac{\sin X}{X} \cdot 
                                       \exp\left(\sum_{k\geq1}\frac{-B_{2k}\cdot E_{2k}(q)}{2\, (2k)!}\frac{(-4X^2)^k}k\right)    \\
		&=\exp\left(\sum_{k\geq1}\frac{(4^k-1)B_{2k}\cdot E_{2k}(q)}{2\, (2k)!}\frac{(-4X^2)^k}k\right) \\
                    &=\sum_{t\geq0}\left(\sum_{\lambda\vdash t}\prod_{k=1}^t\frac1{m_k!}\left(\frac{(4^k-1)B_{2k}\cdot E_{2k}(q)}{(2{\color{black}k}) (2k)!}\right)^{m_k}\right) (-4X^2)^t.
	\end{align*}
	By comparing the coefficients of $X^{2t}$, we deduce that 
	$$V_{2t}(q)=4^t\, (2t)! \sum_{\lambda\vdash t} \prod_{k=1}^t \frac1{m_k!} \left(\frac{(4^k-1)B_{2k}\cdot E_{2k}(q)}{(2{\color{black}k}) (2k)!}\right)^{m_k},
	$$
	which thanks to (\ref{phiv}) completes the proof of Theorem~\ref{MainTheorem} (2).

\end{proof}

\end{document}